\newtheorem{Theorem}{Theorem}[section]
\newtheorem{Proposition}[Theorem]{Proposition}
\newtheorem{Definition}[Theorem]{Definition}
\newtheorem{Corollary}[Theorem]{Corollary}
\title{Elliptic Flowers: simply connected billiard tables with chaotic or non-chaotic flows moving around chaotic or non-chaotic cores}
\author{Leonid Bunimovich}
\date{March 2021}
\begin{document}

\maketitle

\begin{abstract}
We introduce a class of billiards with chaotic unidirectional flows (or non-chaotic unidirectional flows with ``vortices") which go around a chaotic or non-chaotic ``core", where orbits can change their orientation. Moreover, the corresponding billiard tables are simply connected in difference with many attempts to build billiards with interesting and/or exotic dynamics by putting inside billiard tables various ``scatterers" with funny shapes. Therefore the billiards in this new class are amenable to experimental studies in physics labs as well as to the rigorous mathematical ones, which may shed a new light on understanding of classical and quantum dynamics of Hamiltonian systems.
\end{abstract}

\section{Introduction}
Among the main advances of the 20th century science was the discovery that deterministic (dynamical) systems can demonstrate an enormously broad variety of behaviors, from completely regular (integrable) to completely chaotic. Most of the essential steps in this striking progress were based on finding various classes of dynamical systems, which demonstrated new, often unexpected and counter-intuitive, types of behavior. Billiards play a special role in the list of basic classes of dynamical systems. While being arguably the most ancient class of dynamical systems ever studied (what is called now integrability of billiards in circles, was known for millennia) it remains on a forefront of the theory of dynamical systems and their applications. This special role of billiards is mainly due to two factors. Billiards form one of the most visual classes of dynamical systems (if not just the most visual one) and besides, billiards are natural models of many real life phenomena, first of all in physics. 
The purpose of this paper is to introduce a new class of billiards, which demonstrate various new surprising properties. At the same time, these billiards are amenable to rigorous mathematical, as well as to promising numerical, and even to experimental, studies.
Formally these billiards could be considered as generalizations of the first class of billiards, which was presented as a justification of the discovery of a new mechanism of chaos (hyperbolicity) in dynamical systems \cite{3,4,5,6}, called the mechanism of defocusing. Boundaries of the first billiard tables, which demonstrated the mechanism of defocusing, consist of arcs of circles, so that the arcs completing all components of the boundary to the full circles lie within the corresponding billiard table. Such systems were called flower-like billiards \cite{5}, and later referred to as Bunimovich flowers (see e.g. \cite{2,20}). In what follows we will refer to this class of billiards as to circular flowers. It is worthwhile to mention that so beloved, especially by the physics community, stadium billiard is, in fact, a very special and even a singular example in this class. Indeed, a stadium arose from a flower with two circular petals by taking their common tangents. Such billiards were initially called squashes. A squash becomes stadium, when two circular arcs have the same radius.

The boundaries of the presented in this paper class of billiards consist of pieces of ellipses. Therefore these billiards will be called elliptic flowers (EF). However, in difference with circular flowers, which always have chaotic and ergodic dynamics, this class of billiards demonstrates a large variety of possible behaviors. For instance, a billiard is called a track billiard \cite{11} if almost all its orbits always move clockwise or counter-clockwise. Again, an ancient example of such dynamics is given by an integrable billiard between two concentric circles. A natural (although somewhat ambitious) question was whether there exist track billiards with chaotic behavior. This question arises in quantum chaos \cite{14,17} in search of examples/situations where Shnirelman peak \cite{19} in the spectrum of the Schroedinger operator may appear. In a track billiard this peak naturally appears in the phase space at the common boundary of tracks. Although existence of chaotic tracks sounded a little bit too exotic, it was proved that there exist such billiards \cite{11}. In elliptic flowers, which the present paper introduces, always exist tracks, which could be chaotic or non-chaotic. However, these tracks do not occupy the entire phase space, as it was in \cite{11}. Instead these tracks are going around a core, which also is an invariant set with a positive measure. Dynamics in the core also (as in tracks) can be chaotic or non-chaotic. A simple description of dynamics in the elliptic flowers billiards is the existence of a core surrounded by two flows (tracks) going in the opposite directions, which could be chaotic or have internal ``vortices". Dynamics in the core could be chaotic or be the one typical for a generic Hamiltonian system, i.e. a mixture of elliptic islands and chaotic ``seas". It should be noted that an integrable billiard in ellipse has two integrable tracks, consisting of orbits tangent to the confocal smaller ellipses. The corresponding (integrable) core consists of orbits which intersect the segment between the foci after any reflection off the boundary. Therefore the new in spirit elliptic flowers are the ones where either tracks or cores are chaotic, or the both are chaotic. Of course, if a billiard has a caustic, then orbits which do not intersect the caustic form two tracks, and the rest form a core. However, the issue is to investigate dynamics in tracks and core. Examples with regular dynamics, e.g. ellipses are well known.   

An important property of the elliptic flowers is that these billiard tables are simply connected. In other words, they do not have any internal boundaries, which are often used in constructions of billiards with various exotic behaviors. Therefore, elliptic flowers can be built as experimental devices joining in the labs mushrooms \cite{10}, which actually have more sophisticated, than the elliptic flowers, boundaries. In this respect, it is also worthwhile to mention that by varying the natural parameters, which describe the EFs, one can change dynamics of elliptic flowers in several ways. Such studies may shed a new light on our understanding of dynamics of generic Hamiltonian systems, both in the classical and quantum settings.

Another possible use of elliptic flowers comes from the theory of the slow-fast systems, where a recent breakthrough established importance of non-ergodicity for effective averaging over fast dynamics \cite{16,18}. Particularly, mushroom billiards, which always have a divided into KAM islands and ergodic chaotic ``seas" phase space, were effectively used in these studies \cite{16}. Indeed, the elliptic flowers, which we consider here, are always non-ergodic because of existence of at least three ergodic components, which are tracks and a core. Moreover, the boundaries of elliptic flowers can be smoothed, what cannot be done for mushroom billiards and for circular flowers. 

In the present paper the examples are given of elliptic flowers billiards with chaotic and non-chaotic tracks, surrounding chaotic and non-chaotic cores. Moreover, the first example of another phenomenon is given, which goes against the developed community intuition about billiards. It is universally known that, although mechanism of defocusing generates chaotic (hyperbolic) dynamics, the correlations in billiards with at least one focusing component of the boundary always decay slowly (power-like). On another hand, the correlations decay exponentially in dispersing billiards (\cite{1,22}). We demonstrate that in the chaotic cores of the elliptic flowers the correlations may decay exponentially. This result once again shows that the mechanism of dispersing is a special case of the mechanism of defocusing, not just formally, what is obvious, but also factually. Indeed, the purely focusing billiards can generate any type of behavior which dispersing billiards may generate as well as quite different dynamics from the one of dispersing billiards. (Defocusing is clearly more general because this mechanism allows the orbits to converge and diverge in configuration space, while dispersing allows only divergence of the orbits. Therefore dispersing is the limiting case of defocusing when orbits never converge).

Our goal here is to present the simplest sub-classes of the elliptic flowers billiards which demonstrate all the described above new types of dynamics. Therefore in this paper we consider only the simplest elliptic flowers billiards. It allows to provide either simple and visual proofs, or the results immediately follow from the already existing ones in the mathematical theory of billiards. These general results actually have quite sophisticated and long proofs, but they allow almost immediately conclude our results here. Already consideration of these simplest cases allows to get all the promised results on coexistence of chaotic or non-chaotic tracks with chaotic or non-chaotic cores. All four possible types of coexistence are demonstrated. 

However,  the dynamics of general elliptic flower billiards is much richer.  Therefore some natural questions, which arise for the future studies of the EFs billiards, are discussed in the last section. Some conjectures on possible existence of elliptic flowers with even more rich dynamics are also formulated there.  

\section{Multilayered Elliptic Flowers Billiards}

Construction of a multilayered elliptic flower billiard (MEF-billiard) starts with a choice of any convex polygon on the Euclidean plane. Let $A_1,A_2,...A_n$ be the vertices of a convex polygon $A$. This polygon will be called a base for the corresponding family of elliptic flowers billiards. The first elliptic layer over $A$ is formed by all ellipses, which do not intersect $A$ and have focuses  at the points $A_i$ and $A_{i+1}$, where $i$ varies between $1$ and $n$, and $i+1$ is taken modulus $n$. In other words, the first layer of ellipses over a polygon $A$ consists of all ellipses with focuses at the ends of one and the same side of $A$. The second layer of ellipses over $A$ is a union of all ellipses which have focuses at the points $A_i$ and $A_{i+2}$, where again $i+2$ is taken mod($n$). Clearly the second layer of ellipses over the polygon $A$ consists of all ellipses with focuses at the ends of small diagonals of $A$, which connect the ends of two neighboring sides of $A$. Analogously, a layer of ellipses number $m$ over a given polygon is defined in the same way, where, instead of $(i+2)$ (mod $n$), one should take $(i+m)$ (mod $n$). Therefore all ellipses have focuses at some vertices of $A$. It is easy to see that a regular polygon with $n$ vertices has $n/2$ layers, if $n$ is an even number, and $(n+1)/2$ layers, if $n$ is odd.

\begin{Definition}\label{def1}
A billiard table is called an unstructured elliptic flower over a base polygon $A$ if its entire boundary consists of pieces of ellipses which belong to layers of ellipses over $A$.
\end{Definition}

\begin{figure}[h]
	\begin{center}
		\includegraphics[width=6cm]{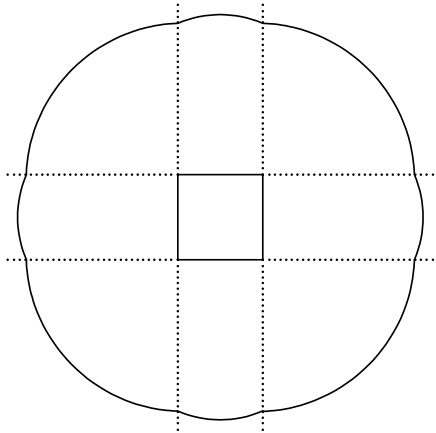}
		\caption{An elliptic flower billiard table there the base polygon is a square}~\label{fig1}
	\end{center}
\end{figure}
For example, in Fig. \ref{fig1} the base polygon $A$ is a square. There are only two layers in this case. Observe because the lines passing through the sides of $A$ never intersect outside of the polygon $A$. It is easy to see that such situation occurs only when $A$ is a triangle or a square.

We will consider in what follows a special subset of the set of all unstructured elliptic flowers, which are constructed in the following way. 

\begin{Definition}\label{def2}
Consider a base polygon $A$ with the vertices $A_1,A_2,...,A_n$. A billiard table $Q$ is called a structural elliptic flower over $A$ if all regular components of the boundary $\partial Q$ are the arcs of the ellipses from the layers over $A$, and each arc does not intersect the straight lines containing the sides of $A$.
\end{Definition}

\noindent{\bf Remark 1.} It follows from the Definition \ref{def2} that the ends of the elliptic arcs which form the boundary $\partial Q$ belong to the straight lines passing through the sides of the base polygon $A$ (see Fig. \ref{fig1}).

\noindent{\bf Remark 2.} Observe that all boundary components of a structural elliptic flower can belong only to one layer of ellipses over the base polygon $A$ if their ends are the points of intersection of the lines passing through the sides of the base polygon $A$. Also there are no structural elliptic flowers with all boundary components belonging to the first layer. (Fig. \ref{fig2} (a),(b))

\begin{figure}[h]
	\begin{center}
		\includegraphics[width=12cm]{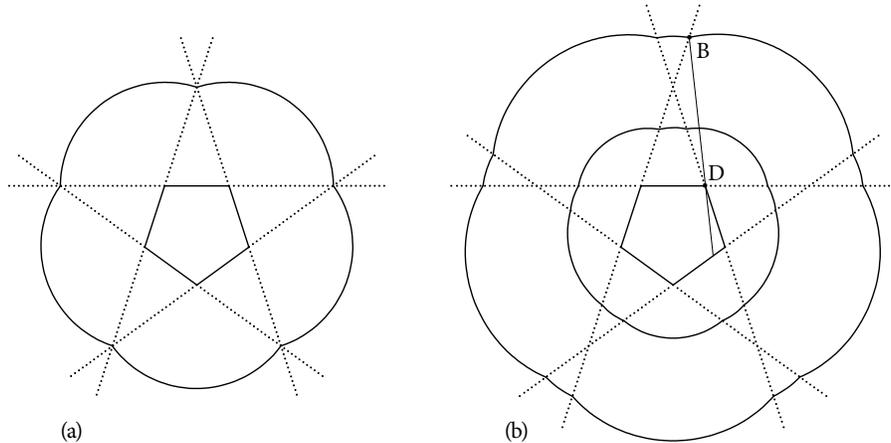}
		\caption{Structural elliptic flowers with a pentagon as a base polygon. (a) All arcs of ellipses belong to the second layer. (b) Regular components of the boundary belong to the 1st and 2nd layers in the smaller flower. In the larger flower petals belong to the 2nd and 3rd layers. The lines like BD cut out from the pentagon a smaller core. }~\label{fig2}
	\end{center}
\end{figure}

Clearly, the ends of any regular (smooth) component of the boundary of a structural elliptic flower belong to lines containing two different sides of the base polygon $A$ (Fig. \ref{fig1}, Fig. \ref{fig2}). However, if $A$ is a polygon with $n \geq 5$ sides then the lines containing the sides of $A$ do intersect outside of this base polygon (Fig. \ref{fig3}). As a result, these lines form a finite partition the plane into some compact closed sets and some infinite sets which we will call zones. 

It is worthwhile to mention that our construction is much more general than a  construction of a billiard table which contains a given caustic $K$. Recall that a curve $K$ is a caustic for a billiard if from one link of a billiard orbit being tangent to $K$ follows that all other links of this orbit are also tangent to $K$.    
Given a convex curve $K$, one can obtain a billiard table possessing this curve $K$ as a caustic by the following string construction. Take a rope with the length larger that the length of $K$. Then go around $K$ tightly keeping the rope at its full length. The resulting closed curve is a billiard table $Q$ which has $K$ as a caustics. This construction smoothens, i.e. even if the curve $K$ is only piecewise $C^1$ the so constructed curve (boundary of a billiard table $Q$) always will be globally $C^1$. For instance, if $K$ is a triangle then the boundary of the resulting  billiard table, consisting of six elliptic arcs, is globally $C^1$, but the curvature is discontinuous in the six points where the ellipses are glued together. 

\begin{figure}[h]
	\begin{center}
		\includegraphics[width=6cm]{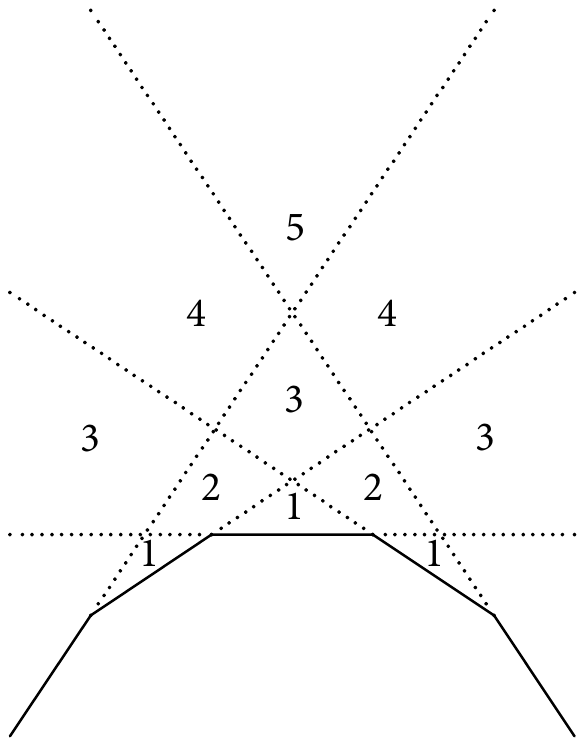}
		\caption{Partition of the plane into zones.}~\label{fig3}
	\end{center}
\end{figure}
For the sake of simplicity and clarity we will ignore a detailed consideration of elliptic flowers which can be constructed with a special procedure (choice of ellipses and their focuses) for each zone. Also for the same reason we will study in this paper only the simplest classes of the elliptic flower billiards. Therefore in what follows we will consider elliptic flowers billiards with the boundary components only from the first and/or the second layers. Moreover, we will study here only regular polygons with $n \leq 6$ as the bases of the EFs billiards. It is easy to see that in these cases there are only two layers and two zones if $n=3$ or $4$, and two layers and three zones if $n=5$ or $6$ (see Fig. \ref{fig2}). Remarkably, already these the simplest elliptic flowers have all the main new features which were discussed in the introduction. Besides, clearly only such elliptic flowers would be the most easy to build in the experimental labs.

Recall that a billiard is a dynamical system generated by an uniform motion of a point particle within a region $Q$ in the Euclidean plane. This region $Q$ is a configuration space of a billiard, which is called a billiard table. The boundary $\partial Q$ of $Q$ consists of a finite number of smooth (at least $C^2$) curves, which are called the regular components of $\partial Q$. A point particle moves along the straight lines within $Q$, and it gets elastically reflected off the boundary, i.e. the angle of incidence equals the angle of reflection. These angles are formed by the internal unit normal vector $n(q)$ at the point of reflection $q\in\partial Q$ and the vectors of the particle's velocities just before and just after a moment of the reflection off the boundary of the billiard table. Therefore, the orbits of billiards are broken lines, consisting of straight segments (links) which connect two points at the boundary of a billiard table, where two consecutive reflections occur. Without any loss of generality we may assume that a constant speed of the particle equals one.

Billiards are Hamiltonian systems. Therefore the billiard dynamics preserves the phase volume $\mu$. It is easy to see that the set of orbits which hit singular points of the boundary of a billiard table has measure zero. The major questions about billiards are related to ergodic and statistical properties of these dynamical systems with respect to the invariant measure $\mu$.

\begin{Definition}\label{def3}
A positive measure invariant set $Tr$ of a billiard dynamical system is called track if all orbits in $Tr$ move either clockwise or counter-clockwise in a billiard table.
\end{Definition}

Certainly, only positive measure tracks are of interest. For instance, a billiard within any convex region $Q$ with smooth boundary has zero measure ``tracks" consisting of tangent to $\partial Q$ orbits.

It is well-known that a billiard in ellipse $E$ is an integrable dynamical system. One positive measure family of orbits of such billiard consists of all orbits which are tangent between any two consecutive reflections off $E$ to some confocal with $E$ ellipse. Another positive measure family of orbits consists of all such orbits that the straight line, which contain a segment between any two consecutive reflections off $\partial E$, is tangent to some confocal with $E$ hyperbola. The two families of orbits, tangent to the confocal ellipses and hyperbolas, are separated in the phase space by the set of orbits which pass through the focuses of $E$ between any two consecutive reflections off the boundary. It is worthwhile to recall that, according to the optical definition of ellipses, an ellipse $E$ is a such closed curve that there are two points $F_1$ and $F_2$ with the property that a ray of light passing through $F_1$ will pass after reflection off $\partial E$ through $F_2$ , and vice versa. Certainly the orbits tangent to confocal ellipses form tracks with regular (even integrable) dynamics.

In what follows the regular boundary components of the flower billiard tables will be sometimes referred to as to petals.

\section{Main Results}

This section presents several statements, which show that structural elliptic flowers always have tracks which could be chaotic (hyperbolic) or non-chaotic. Likewise the dynamics in a core, which is the complement to tracks in the phase space of a billiard, could be chaotic or not. A billiard in ellipse has a core consisting of such orbit which between any two consecutive reflections off the boundary intersect the segment connecting the focuses.

Denote by $M(A)$, $Q(A)$ the phase space and the configuration space (billiard table) built over the base polygon $A$. A core $Co(M(A))$ is the complement to the tracks $Tr(A)$ in the phase space $M(A)$. The projection of the core $Co(M(A))$ to the billiard table $Q(A)$ will be called a base core and denoted by $Co(Q(A))$.

We start with the following simple Proposition.

\begin{Proposition}\label{prop1}
Let $A$ be a regular polygon with $n \leq 6$ sides. Then any structural elliptic flower billiard with the base $A$ will have two tracks. The EF billiards built over triangles and squares will always have a base core $A$. The EF billiards built over pentagons and hexagons will have the same base core $A$ if their boundaries belong to the zones 1 and 2. Otherwise, if the boundary of an elliptic flower billiard table intersect zone 3, then a core is a convex polygon, which is a proper subset of $A$.
\end{Proposition}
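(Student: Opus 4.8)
The plan is to base everything on the optical property of ellipses together with the elementary combinatorial geometry of the regular $n$-gon ($n\le 6$) and its diagonals, so that most assertions reduce to the question ``does a given chord cross a given segment''. First I would isolate a \emph{reflection dichotomy}. At a point $P$ of an ellipse with foci $F_1,F_2$ the inward normal bisects $\angle F_1PF_2$, so billiard reflection at $P$ sends the ray $PF_1$ to the ray $PF_2$. Consequently, if $\ell,\ell'$ are the two successive links meeting at $P$ on a petal whose focal chord is $s=F_1F_2$, then either \emph{both} of the lines carrying $\ell,\ell'$ meet the open segment $s$, or \emph{neither} does; and in the second case $\ell$ and $\ell'$ lie on the same side of $s$ in the cyclic sense, so the orbit does not reverse its sense of rotation about $s$ at $P$. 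Since each petal lies in a single zone and the petals form the connected curve $\partial Q$, a short additional argument upgrades ``$\ell$ misses $s$'' to ``$\ell$ does not enter the relevant chamber of $A$'': a link leaving $P$ outside the cone $\angle F_1PF_2$ is intercepted by an adjacent petal before it could re-enter that chamber.

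With the dichotomy in hand, the two tracks appear as follows. Every petal's focal chord $s$ is a side of $A$ (layer $1$) or a short diagonal of $A$ (layer $2$), hence a chord of the single convex polygon $A$. Call an orbit \emph{crossing} if some link of it meets one of these chords; by the dichotomy this is a time-symmetric invariant property, and a non-crossing orbit stays in the topological annulus $Q\setminus A$ and, again by the dichotomy, winds around the hole always in one and the same sense. Hence the non-crossing orbits split into precisely two invariant sets according to that sense, the clockwise and the counter-clockwise track; positivity of their measure is checked directly by producing an open family of non-crossing orbits, e.g.\ those close to the outermost elliptic arc, which lie on genuine confocal caustics encircling $A$. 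The core $Co(M(A))$ is then the complementary set of crossing orbits, which carry links on both sides of some $s$ and therefore reverse orientation infinitely often.

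It remains to identify the base core $Co(Q(A))$, i.e.\ to determine which of the chords $s$ actually occur in a given flower and where a crossing orbit forced through such an $s$ can reach. For a triangle layers $1$ and $2$ coincide, and for a square one checks that the only layer-$2$ petals compatible with Definition \ref{def2} pierce the diagonals of the square; in either case an elementary computation shows that the union of all chords of $Q$ meeting a side of $A$, continued by the billiard flow, is exactly $A$, so $Co(Q(A))=A$. For a regular pentagon or hexagon the layer-$2$ petals add the short diagonals of $A$ as possible pierced chords; if every petal lies in zones $1$ and $2$ the corresponding crossing orbits still do not leave $A$ and still sweep it out, so again $Co(Q(A))=A$, whereas if some petal lies in zone $3$ the short diagonals carrying it cut a corner off $A$, a crossing orbit can be shown never to enter that corner, and the base core becomes the intersection of $A$ with the half-planes bounded by those diagonals, which is a convex polygon strictly inside $A$.

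I expect the main obstacle to be the \emph{global} half of the dichotomy: proving that the sense of rotation is a genuinely well-defined conserved quantity for non-crossing orbits even though consecutive reflections occur off different ellipses with different foci. Unlike a single ellipse there is no ready-made first integral, so the argument has to exploit that all focal chords are chords of the one convex set $A$, making ``avoiding $A$'' the right invariant and the annular topology of $Q\setminus A$ the reason the sign of the rotation number is meaningful. A secondary, purely geometric nuisance is pinning down the three zones for $n=5,6$ and verifying the ``sweeps out exactly $A$ (resp.\ the subpolygon)'' statements, which requires a careful but routine case check on the diagonals of the regular polygon.
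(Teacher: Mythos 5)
Your overall strategy is the paper's own: the single‑reflection dichotomy coming from the optical property of the ellipse, its propagation along an orbit to split the phase space into two oppositely winding tracks plus a complementary core, and, when petals reach zone~3, a cut of $A$ down to a smaller convex polygon. The genuine problem is in the zone‑3 step. You cut $A$ along the short diagonals carrying the zone‑3 petals and assert that ``a crossing orbit can be shown never to enter that corner.'' This is false, and it contradicts your own dichotomy: an orbit of the core that reflects off the petal with foci $F_1,F_2$ must cross the focal chord $F_1F_2$ on the way to and from that petal, and inside $A$ the region on the petal side of that chord is precisely the corner triangle you claim is avoided, so core orbits enter it unavoidably. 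The paper's cut is different and strictly milder: for each endpoint $B$ of an elliptic arc lying in zone~3 one joins $B$ to the \emph{other} focus $D$ of that ellipse ($B$ is already joined to the first focus by the zone‑boundary line through a side of $A$), and the lines through these segments $BD$ cut out the core polygon $A'$. The binding constraint is the cone bounded by $BF_1$ and $BD$ at the arc endpoint, not the focal chord itself; the removed corner, bounded by $BD$, is a proper subset of the triangle your diagonal would cut off. So your identification of the base core in the zone‑3 case is wrong and the supporting claim fails; the qualitative conclusion (a convex polygon properly inside $A$) is true, but your argument does not establish it.

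A secondary misstep: positivity of the track measure via orbits near the outermost arcs ``which lie on genuine confocal caustics encircling $A$'' does not work. Consecutive petals are arcs of non‑confocal ellipses meeting at corners, so there is no common confocal family and no caustic encircling $A$ is guaranteed; indeed in the flowers of Theorems \ref{thm3} and \ref{thm4} the tracks are hyperbolic and ergodic, so such caustics cannot exist there. The repair stays inside your framework: once the dichotomy gives invariance of ``no link meets the relevant chords,'' any nonempty open set of phase points whose current link avoids $A$ (for instance, links joining nearby interior points of two adjacent petals far from $A$) is contained in a track, which already yields positive measure. The remaining parts of your plan (tracks from non‑crossing orbits winding around $Q\setminus A$, base core $A$ for triangles, squares, and boundaries confined to zones 1 and 2) follow the paper's argument, at essentially the paper's level of detail.
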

\begin{proof}
If the boundary of a billiard table $Q(A)$ does not intersect zone 3, then the statement immediately follows from the construction of the structural multilayered elliptic flowers and from the optical definition of ellipses. 

Let now the boundary of $Q(A)$ intersects zone 3 (see Fig.2(b)). Connect then each point, which is an end point of a boundary component (elliptic arc) in this zone, by the straight segments to the another focus ($D$ in Fig. \ref{fig2} (b)) of the corresponding ellipse. (Observe, that this end point $B$ is already connected to one focus of this ellipse by the straight line which belongs to the boundary of the zone 3). Then the collection of all straight lines, which are going through all such segments, cut out a smaller convex polygon $A'$ from the $A$. Again, the construction of the structured multilayered elliptic flowers, together with the optical definition of the ellipses, ensures that $A'$ is indeed the base core of the billiard in $Q(A)$.
\end{proof}

Actually any structural elliptic flower billiard has two tracks of positive measure. A proof of the corresponding general statement uses the same argument as the Proposition above. 

\begin{Theorem}\label{thm1}
Let $A$ be a convex polygon in the Euclidean plane with the vertices $A_1, A_2,...,A_n$. Consider any billiard table $Q(A)$, which is a structured elliptic flower built over the base polygon $A$. Then the billiard in $Q(A)$ has two positive measure tracks, consisting of orbits moving clockwise and counter-clockwise respectively.  
\end{Theorem}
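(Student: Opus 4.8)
The plan is to rerun the argument of Proposition~\ref{prop1} for an arbitrary convex base polygon, isolating the two geometric facts that make it work. Label the regular boundary components $P_1,\dots,P_k$; by Definition~\ref{def2} each $P_\alpha$ is an arc of an ellipse $E_\alpha$ whose two foci are vertices of $A$, and let $s_\alpha$ be the segment joining those two vertices. Two elementary observations: $s_\alpha\subset A$ because $A$ is convex, and $A$ lies in the interior of $E_\alpha$ because $E_\alpha$ has two vertices of $A$ in its interior yet, being taken from a layer over $A$, is disjoint from $A$. The only one-ellipse input needed is the optical dichotomy already recalled in the text: for a chord $\ell$ of $E_\alpha$ and its billiard reflection $\ell'$, either both cross $s_\alpha$, or neither does and then both are tangent to one and the same confocal ellipse nested in $E_\alpha$ and enclosing $s_\alpha$, or both pass through a common focus.

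Next I would pin down the base core exactly as in the proof of Proposition~\ref{prop1}: through each endpoint of a petal $P_\alpha$ that lies on a side-line of $A$ outside $A$, draw the line to the focus of $E_\alpha$ not on that side-line; these finitely many lines cut a convex polygon $B=Co(Q(A))$ out of $A$ (and $B=A$ when no such endpoint occurs). The substance --- what the proof of Proposition~\ref{prop1} phrases as ``follows from the construction $\dots$ and the optical definition of ellipses'' --- is that these cutting lines are arranged so that (i) a billiard link crosses $B$ precisely when, at each petal on which it ends, it separates the corresponding two foci, and (ii) every link of an orbit that never crosses $B$ is tangent, at the petal it hits, to a confocal ellipse enclosing $B$. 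Granting this, let $\mathcal T^{+}$ (respectively $\mathcal T^{-}$) be the set of orbits none of whose links crosses $B$ and which keep $B$ on the left (respectively on the right). By (i) and the dichotomy an orbit has either all of its links disjoint from $B$ or none, so $\mathcal T^{+}\cup\mathcal T^{-}$ is exactly the set of orbits possessing a link disjoint from $B$; and by (ii) at each reflection the incoming and outgoing links are the two tangents from the reflection point to a confocal ellipse that encloses the convex set $B$, so $B$ lies in the angular sector between them and stays on a fixed side, i.e. every such orbit winds monotonically around $B$. Hence $\mathcal T^{+}$ and $\mathcal T^{-}$ are disjoint tracks in the sense of Definition~\ref{def3}.

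To see that both have positive measure I would argue directly rather than via periodic orbits. Fix a petal $P_\alpha$; since $E_\alpha$ is disjoint from the compact set $A\supseteq B$, every sufficiently short chord of $E_\alpha$ lying near $P_\alpha$ is disjoint from $B$. Therefore the set of phase points $(q,v)$ with $q$ an interior point of $P_\alpha$ and $v$ nearly tangent to $P_\alpha$ --- an open set of positive measure $\mu$ --- consists of orbits one of whose links is disjoint from $B$, and by the previous paragraph every such orbit lies in $\mathcal T^{+}\cup\mathcal T^{-}$. Thus $\mu(\mathcal T^{+}\cup\mathcal T^{-})>0$; since the velocity reversal $(q,v)\mapsto(q,-v)$ is a measure-preserving involution that interchanges $\mathcal T^{+}$ with $\mathcal T^{-}$, both tracks have positive measure, as claimed.

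The real obstacle is establishing (i) and (ii): one must verify that the finite configuration of cutting lines from the proof of Proposition~\ref{prop1} simultaneously accounts for the constraints coming from every petal and every layer, equivalently that a track orbit stays tangent, at each petal it visits, to a confocal ellipse enclosing the base core. This is exactly the point at which the confocal caustic produced at one petal must be reconciled with the focal segments, and caustics, of the neighbouring petals, and it is not a mere restatement of the optical property --- which is why, for $n\le 6$, the proof of Proposition~\ref{prop1} can lean on a zone-by-zone picture, whereas the general case requires this interaction to be controlled uniformly. Once (i) and (ii) are in place, the dichotomy, the invariance and winding of $\mathcal T^{\pm}$, and the positivity of measure via near-tangent links are all routine.
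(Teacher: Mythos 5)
Your proposal follows essentially the same route as the paper: reduce to the picture of Proposition~\ref{prop1} (a base core cut out of $A$ by lines through petal endpoints and the opposite focus), invoke the optical dichotomy at each petal so that orbits avoiding the core keep avoiding it and wind monotonically around it, and conclude that the complement of the core splits into a clockwise and a counter-clockwise track. The facts you label (i)--(ii) and defer are precisely the step the paper itself dispatches with ``follows from the construction of the structural elliptic flowers and the optical definition of ellipses'' (together with its observation that a link through a vertex of $A$ propagates to links through vertices, which plays the role of your separating focal/caustic family), while your explicit positivity-of-measure argument via near-tangent chords and the time-reversal involution supplies detail the paper's proof leaves implicit.
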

\begin{proof}
The base polygon $A$ is convex. Therefore it lies on one side of a line going through any side of $A$. Then by construction of the structural elliptic flowers it follows from the optical definition of ellipses that if any link of a billiard orbit goes through some vertex of $A$ then all other links of such orbit also contain one vertex of $A$.

By making now an analogous consideration to the one in the proof of Proposition \ref{prop1}, we obtain that a billiard in any structured elliptic flower $Q(A)$ with the base $A$ has two tracks. Likewise, the corresponding core coincides with $A$, if the boundary $\partial Q(A)$ belongs only to the zones 1 and 2. Otherwise, a core $Co(Q(A))$ of the corresponding billiard is a convex polygon, which is a proper subset of the base polygon $A$.
\end{proof}

\begin{Corollary}\label{cor1}
The core $Co(M(A))$ consists of all such billiard orbits that each their link intersect $A$.
\end{Corollary}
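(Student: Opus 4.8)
The plan is to read off the corollary directly from Theorem \ref{thm1} and its proof, using only the optical characterization of ellipses and the dichotomy established there for links of a billiard orbit. First I would recall the trichotomy available for a billiard link reflecting off a single confocal ellipse with foci at two vertices $A_i, A_j$ of the base polygon $A$: by the optical definition of ellipses (as quoted in the excerpt), the straight line carrying a link is either tangent to a confocal smaller ellipse, tangent to a confocal hyperbola, or passes through one of the foci $A_i$ or $A_j$. The key observation, already exploited in the proof of Theorem \ref{thm1}, is that this classification is preserved along the orbit within the portion of the flower governed by a fixed layer, and — because $A$ is convex and each petal lies strictly outside the lines through the sides of $A$ — the ``through a focus'' case and the ``tangent to a confocal ellipse'' case are exactly the cases in which the link does \emph{not} cross the interior of $A$, while the ``tangent to a confocal hyperbola'' case is exactly the case in which the link \emph{does} cross $A$ (or the smaller polygon $A'$, in the presence of zone~3).

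Granting that, the argument is a clean set-theoretic identification. By Definition \ref{def3} and Theorem \ref{thm1}, the two tracks $Tr(A)$ consist precisely of the orbits all of whose links avoid the (base) core polygon: a link tangent to a confocal ellipse never enters the region cut out by the core, and a link through a vertex of $A$ slides along the boundary of that region, so such orbits wind monotonically clockwise or counter-clockwise around the core and never reverse orientation. The core $Co(M(A))$ is by definition the complement $M(A)\setminus Tr(A)$. Hence an orbit lies in the core if and only if it is \emph{not} in a track, i.e. if and only if at least one — and therefore, by the invariance of the link-type under the dynamics established in the proof of Theorem \ref{thm1}, every — link of the orbit intersects $A$ (respectively $A'$). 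I would spell this out: if one link crosses $A$ it is of ``confocal hyperbola'' type, the reflected link off the next petal is again of ``confocal hyperbola'' type by the optical law, hence also crosses $A$, and induction propagates this to all links both forward and backward in time. This gives the ``each link intersects $A$'' description and simultaneously shows the complement of a track is itself a union of orbits, confirming it is the invariant set called the core.

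The main obstacle is purely one of bookkeeping rather than of depth: one must make sure the ``intersects $A$'' in the corollary is read consistently with Proposition \ref{prop1} and Theorem \ref{thm1}, where in the presence of zone~3 the genuine invariant core is the smaller polygon $A'\subsetneq A$, not $A$ itself. Strictly, the crossing dichotomy should be phrased relative to the lines that cut out $A'$; every core link crosses $A'$, and a fortiori crosses $A$, while the converse (crossing $A$ forces crossing $A'$) needs the observation that the segment joining an arc endpoint to the far focus — used in the proof of Proposition \ref{prop1} to carve out $A'$ — is exactly the boundary case separating track links from core links. Once this is noted, the corollary follows with no further computation; I would state it as an immediate consequence of the invariance of link-type plus the optical definition of ellipses, referring back to the proof of Theorem \ref{thm1} for the propagation step.
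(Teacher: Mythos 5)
Your argument follows the paper's own route: the paper disposes of this corollary in one line, as an immediate consequence of the construction of structural elliptic flowers and of the proofs of Proposition \ref{prop1} and Theorem \ref{thm1}, which is exactly the optical-property propagation of ``link meets the base core'' along the orbit, together with the $A$ versus $A'$ (zone 3) bookkeeping, that you spell out in more detail. One caution on wording: your claim that the confocal classification is ``preserved along the orbit within a fixed layer'' is not literally true, since consecutive petals (even in one layer) are arcs of ellipses with different foci and so do not share a confocal family; the sound version, which your actual induction uses, is that a single reflection off a given petal preserves crossing of that petal's own focal chord, combined with the visibility fact that a link ending on a petal outside zone 3 meets $A$ if and only if it meets that focal chord.
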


Observe that the Corollary \ref{cor1} implies also that the core is an invariant subset of the corresponding billiard dynamics. 

This statement immediately follows from the construction of the structural multilayered elliptic flowers and the proof of the Proposition \ref{prop1} and Theorem \ref{thm1}.

\begin{Corollary}\label{cor2}
The core $Co(M(A))$ of an elliptic flower billiard has a positive measure in the phase space $M(A)$. That means $\mu(Co(M(A)))>0$.
\end{Corollary}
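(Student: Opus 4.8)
\noindent The plan is to exhibit a nonempty open subset of the phase space $M(A)$ that lies entirely inside the core, and then to invoke the equivalence of the invariant billiard measure with Lebesgue measure. The first ingredient is a convenient ``one--link'' description of the core: by Corollary~\ref{cor1} the core $Co(M(A))$ is the invariant set of all orbits every link of which meets $A$, while by Theorem~\ref{thm1} its complement $M(A)\setminus Co(M(A))$ is the union of the two tracks, whose orbits go \emph{around} $A$ and therefore have no link meeting the interior $\operatorname{int}(A)$. Hence an orbit belongs to $Co(M(A))$ as soon as \emph{one} of its links meets $\operatorname{int}(A)$: otherwise it would be a track orbit, all of whose links avoid $\operatorname{int}(A)$, a contradiction. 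This reduction from ``all links'' to ``one link'' is exactly where the structural construction and the optical (confocal--caustic) property of ellipses enter, precisely as in the proofs of Proposition~\ref{prop1} and Theorem~\ref{thm1}.

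Next I would produce the open set directly. Since $A$ is a convex polygon it has nonempty interior and $A\subset Q(A)$. Fix $x_0\in\operatorname{int}(A)$ and a unit direction $w$, and let the open segment $(a,b)$ be the connected component, containing $x_0$, of the intersection of $\operatorname{int}Q(A)$ with the line through $x_0$ in direction $w$ (pointed so that $b=a+sw$ with $s>0$); then $a,b\in\partial Q(A)$. Choosing $w$ generically we may assume $a$ and $b$ are regular (non--corner) points of $\partial Q(A)$ at which $w$ is not tangent. Then $(a,w)$ is a nonsingular phase point of $M(A)$, its first billiard link is exactly the segment $[a,b]$, and this link contains $x_0\in\operatorname{int}(A)$. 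Because the flight map is continuous near $(a,w)$ and ``the link emanating from $(q,v)$ meets the open set $\operatorname{int}(A)$'' is an open condition, there is an open neighbourhood $U\subset M(A)$ of $(a,w)$ every point of which has a link meeting $\operatorname{int}(A)$; by the one--link description, $U\subset Co(M(A))$.

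Finally, in the standard coordinates $(r,\varphi)$ on $M(A)$ (with $r$ the arclength along $\partial Q(A)$ and $\varphi\in[-\tfrac{\pi}{2},\tfrac{\pi}{2}]$ the angle between the velocity and the inner normal) the invariant measure is $d\mu=c\cos\varphi\,dr\,d\varphi$, which is mutually absolutely continuous with $dr\,d\varphi$ on $\{|\varphi|<\tfrac{\pi}{2}\}$. Shrinking $U$ if necessary so that it lies in that region — possible since $w$ was chosen non--tangent at $a$ — we obtain $\mu\big(Co(M(A))\big)\ge\mu(U)>0$, as claimed. The only genuinely delicate point is the one--link description used in the first step; once it is granted, everything else is soft. (In the symmetric cases treated in this paper one can bypass it altogether by taking a neighbourhood of a short periodic orbit through the centre of the regular base polygon, all nearby orbits of which keep a link crossing the centre of $A$.)
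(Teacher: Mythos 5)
Your argument is correct and follows essentially the same route the paper intends: the paper treats this corollary as immediate from Corollary~\ref{cor1} together with the invariance of ``links crossing $A$'' established via the optical property in the proofs of Proposition~\ref{prop1} and Theorem~\ref{thm1}, which is exactly the one--link reduction you invoke. Your explicit construction of an open set of phase points whose first link crosses $\operatorname{int}(A)$ and the appeal to the equivalence of $\mu$ with $\cos\varphi\,dr\,d\varphi$ simply fill in the details the paper leaves implicit.
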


\begin{Definition}\label{def4}
A core $Co(M(A))$ of an elliptic flower billiard will be called an ergodic core if it is an ergodic component of this dynamical system. Analogously, a track will be called ergodic if it is an ergodic component of the corresponding elliptic flower billiard.
\end{Definition}

As was already mentioned, in what follows we will mainly concentrate on the case when the base polygon $A$ is a regular polygon with $n \leq 6$ sides.

\begin{Proposition}\label{prop2}
If the base polygon $A$ is a regular triangle or a square then the dynamics on the corresponding core of a structural elliptic flower over $A$ cannot be ergodic.
\end{Proposition}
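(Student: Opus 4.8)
The plan is to produce, inside the core, an invariant set of positive measure which is a proper subset of the core; then by Definition~\ref{def4} the core cannot be an ergodic component.

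First I would fix the description of the core. By Proposition~\ref{prop1} the base core of any structural elliptic flower over a regular triangle or square is the polygon $A$ itself, and by Corollary~\ref{cor1} the core $Co(M(A))$ is exactly the set of orbits all of whose links cross $A$. The feature that makes $n\le 4$ special is that the lines carrying the sides of $A$ do not meet outside of $A$: consequently the petal lying over a side $s$ has its two foci precisely at the endpoints of $s$, and its projection onto the base covers the whole strip over $s$, so any core orbit leaving $A$ across $s$ must re-enter $A$ across the same side $s$ after one reflection. I would record the resulting key structural fact: along a core orbit every reflection off a petal is of ``focal-chord crossing'' type, hence it preserves the confocal hyperbola (with foci at the endpoints of $s$) to which the incoming and outgoing links are tangent. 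This is exactly the optical rigidity that fails once $A$ has $n\ge 5$ sides and petals sit over the small diagonals, and it is what I would exploit.

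Second I would build the invariant set, and the most visual route is to locate a linearly stable periodic orbit of the core and invoke the standard Birkhoff--Moser island theorem (an elliptic, non-degenerate periodic point of a two-dimensional area-preserving map is surrounded by invariant curves bounding a region of positive measure). The natural candidate is the short ``central'' orbit through the petal apices: for the square it is the $2$-periodic orbit along an axis of symmetry joining the apices of two opposite side-petals, and for the equilateral triangle it is the $3$-periodic orbit through the three petal apices, which the dihedral symmetry of the flower forces to be a genuine billiard orbit and which, for petals that are not too elongated, is a core orbit. Since these apices are minor-axis vertices of the corresponding ellipses, the radii of curvature there equal $a^2/b$, which is large; writing the $2\times2$ monodromy matrix in terms of the link lengths and these radii, one checks $|\mathrm{tr}|<2$, so the orbit is elliptic and its island is a proper positive-measure invariant subset of $Co(M(A))$, contradicting ergodicity of the core.

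The main obstacle is uniformity: one must guarantee that the chosen stable orbit actually lies in the core (and not in a track) and stays elliptic for the \emph{whole} family of structural elliptic flowers over $A$, including asymmetric ones — for sufficiently ``fat'' petals the apex orbit can migrate into a track or turn hyperbolic, in which case one needs either a lower-order bifurcating stable orbit or, better, a non-perturbative argument: use the focal-chord rigidity of Step~1 to construct directly an invariant essential curve in the phase cylinder of the core (for instance, the orbits tangent to a convex curve strictly inside $A$ that is consistently preserved by all petal reflections), after which an essential invariant curve automatically splits the core's phase cylinder into two invariant pieces of positive measure. Either way, the argument must collapse for $n=5,6$, where the base core shrinks strictly inside $A$ by Proposition~\ref{prop1}, a core orbit need not return across the same focal chord, and the persistence of such a stable structure is precisely what later fails, allowing chaotic cores.
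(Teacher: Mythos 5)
Your overall strategy (exhibit a linearly stable periodic orbit inside the core, then use the surrounding island to contradict ergodicity) is the same as the paper's, but the specific orbits you pick are not the right ones, and the stability claim you make for them fails. The paper's key observation is that for a regular triangle or square the boundary contains pairs of petals lying on \emph{confocal} ellipses (the only admissible focus pairs are the sides of the triangle, respectively the sides and diagonals of the square, so facing petals share their foci), and the period-two orbit joining the ends of the minor axes of two confocal ellipses is stable \emph{automatically}: with $R_i=b_i+c^2/b_i$ and flight length $L=b_1+b_2$ one gets $(1-L/R_1)(1-L/R_2)=(c^2-b_1b_2)^2/\bigl((b_1^2+c^2)(b_2^2+c^2)\bigr)$, which lies in $[0,1)$ for all parameter values, with no symmetry or size restriction (it is in the core because it crosses the common focal chord, hence $A$). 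Your candidates do not have this uniformity. For the square you take the apex-to-apex orbit between two \emph{opposite side} petals; those ellipses are not confocal, and there $L=b_1+l+b_2$ while $R_i=b_i+l^2/(4b_i)$, so for petals large relative to the base ($b_i\gg l$) one finds $(1-L/R_1)(1-L/R_2)\approx(b_1+l)(b_2+l)/(b_1b_2)>1$ and the orbit is \emph{hyperbolic}. Thus ``the radii of curvature at the apices equal $a^2/b$, which is large, hence $|\mathrm{tr}|<2$'' is not a valid inference: large osculating radii combined with an even larger flight length is exactly how defocusing produces hyperbolicity elsewhere in this paper. Your triangle candidate (the symmetric $3$-periodic orbit through the apices) both presupposes a dihedrally symmetric flower, which Definition~\ref{def2} does not require, and comes with no stability computation at all.

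The fallback you sketch for the asymmetric or fat-petal cases does not close this gap. The ``focal-chord rigidity'' is only a per-reflection statement relative to the foci of the petal being hit; consecutive core reflections are generically off petals with \emph{different} foci (for the square, an orbit leaving $A$ across a side can hit a corner petal whose foci are the ends of a diagonal and re-enter across a different side), so no single confocal hyperbola family, and no convex curve inside $A$, is preserved by all petal reflections; moreover the table is non-convex, so there is no caustic or essential invariant curve to split the core with. The repair is precisely the paper's device: pair each relevant petal with the confocal petal facing it across their common focal chord and use the minor-axis period-two orbit of that pair, whose ellipticity is parameter-free (up to the degenerate equality $b_1b_2=c^2$), rather than an orbit whose stability depends on the size of the petals.
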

\begin{proof}
Indeed, according to the Remark 1 (see also Fig. \ref{fig1}) the boundary $\partial Q(A)$ of a structural elliptic flower over a regular triangle consists of six regular components, which contain three pairs of arcs of two confocal ellipses. Moreover, a period two orbit which connects the ends of the small axes for any pair of these ellipses is a stable elliptic periodic orbit. Therefore the dynamics on a core of EF billiards, built over a regular triangles, is not ergodic. The same is true for the four pairs of the arcs of confocal ellipses in case when $A$ is a square. It is easy to see that the boundary of a structural elliptic flower built  over square contains two pairs of arcs of confocal ellipses (Fig. \ref{fig1}). Therefore, in this case there are at least two elliptic periodic orbits (of period two) in the core. Therefore the core is not an ergodic (chaotic) components of the corresponding elliptic flower billiard.
\end{proof}

The large and the small axes of ellipses will be denoted by $a_i$ and $b_i$ respectively, where the index $i$ refers to one of the ellipses which contain the $i$-th boundary components of an elliptic flower billiard table. Also $c^2=a^2-b^2$, where $c$ is a half-length of the segment connecting the focuses of an ellipse.

In this paper our main goal is to demonstrate in the simplest examples the richness of dynamics of the elliptic flowers billiards. Therefore in what follows we will consider the simplest billiards in this class called special one layer (SOL) elliptic flowers.

\textbf{Construction of special one layer elliptic flowers:}
Let $A$ be an arbitrary convex polygon with $n$ vertices. Consider the straight semi-lines which have the ends at the centers of the sides of $A$, are orthogonal to the corresponding sides, and do not intersect $A$.  Then a special one layer elliptic flower is a billiard table with $n$ boundary components, which are the arcs of ellipses with the focuses located at the ends of the small diagonals of $A$ and the ends on two semi-lines orthogonal to the (intersecting) sides of $A$, which the corresponding small diagonal connects (Fig. \ref{fig4}). If $A$ is a triangle, then the sides play also the role of small diagonals.   

\begin{figure}[h]
	\begin{center}
		\includegraphics[width=6cm]{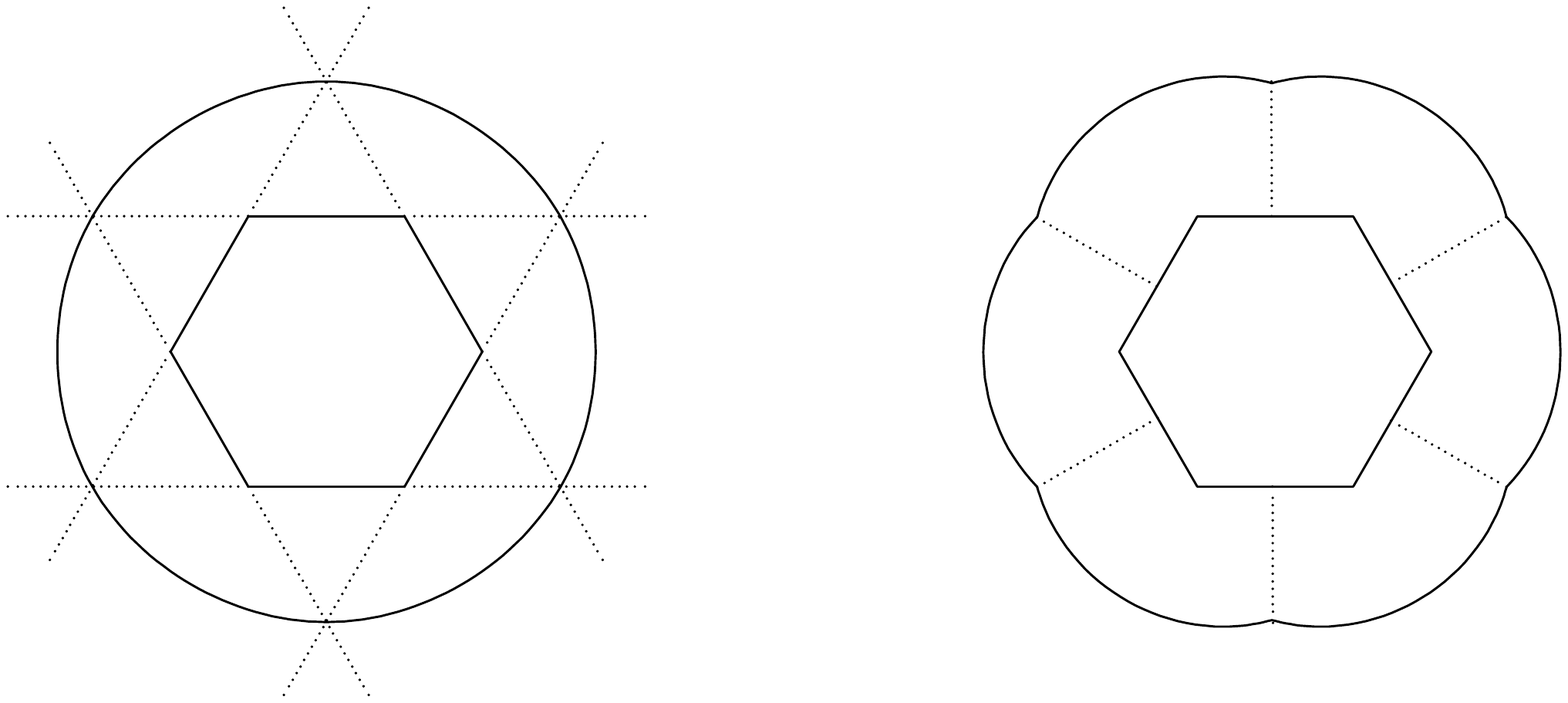}
		\caption{Narcissus: structural one layer elliptic flower over a regular hexagon.}~\label{fig4}
	\end{center}
\end{figure}
The next statement shows that, there are structural elliptic flowers with an ergodic core if a regular base polygon has at least five sides. (It should be mentioned though that structural elliptic flowers built over non-regular triangles and rectangulars may also have ergodic dynamics on their core).  

\begin{Theorem}\label{thm2}
If a number of sides of the base regular polygon $A$ is  $n \geq 5$, then there exist structured elliptic flowers $Q(A)$ such that dynamics of the corresponding billiard, restricted to the core, is hyperbolic and ergodic.  
\end{Theorem}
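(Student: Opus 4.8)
The plan is to realize the desired examples within the family of special one layer (SOL) elliptic flowers over a regular $n$-gon $A$, $n\ge 5$ (as in Fig.~\ref{fig4}), and then to deduce hyperbolicity and ergodicity of the core from the available general theory of billiards obeying the defocusing mechanism. Fix such a SOL flower $Q(A)$: the petal over the vertex $A_i$ is an arc of the second-layer ellipse $E_i$ with foci $A_{i-1},A_{i+1}$, cut off between the two outward normals erected at the midpoints of the sides $A_{i-1}A_i$ and $A_iA_{i+1}$. Since $A$ is regular, those two normals are mirror images of each other in the perpendicular bisector of $A_{i-1}A_{i+1}$, that is, in the minor axis of $E_i$; hence every petal is a symmetric sub-arc of $E_i$ about the minor-axis vertex of $E_i$, and all petals are congruent. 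One free parameter remains (the eccentricity of the common ellipse shape, equivalently how far $E_i$ bulges past $A$). Observe that, unlike the triangle and square of Proposition~\ref{prop2}, for $n\ge 5$ no two petals lie on confocal ellipses, so there is no symmetry-forced stable period-two orbit of the type that obstructs ergodicity there.

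The first step is to choose this parameter so that: (i) each petal is an absolutely focusing arc --- for $E_i$ sufficiently far from $A$ the petal is a short (sub-semicircle) arc of a nearly circular ellipse, hence absolutely focusing exactly as the petals of circular flowers are, by the criteria of Donnay and Bunimovich; and (ii) on the core the defocusing strictly dominates --- by Corollary~\ref{cor1} every link of a core orbit crosses the base polygon $A$, so between two consecutive reflections off petals a core orbit travels a segment whose length is bounded below by a definite constant, and moreover two consecutive reflections of a core orbit never occur on the same petal (a chord of a single convex petal stays outside $A$); choosing the parameter so that this lower bound exceeds the maximal radius of curvature along the petals makes every reflection-plus-free-flight of a core orbit net defocusing. (For $n=6$ a third requirement must be added: the period-two orbit running between two opposite petals along a long diagonal of $A$ should be hyperbolic. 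By the standard stability computation for a period-two orbit between two focusing arcs this is a further lower bound on that diagonal relative to the radius of curvature at the petal vertices, and one checks that the admissible parameter window is non-empty.)

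With such a choice the billiard restricted to the invariant set $Co(M(A))$ (of positive measure, by Corollary~\ref{cor2}) has only absolutely focusing boundary pieces, each reflection is followed by a uniformly long free flight, and no two consecutive reflections of a core orbit lie on the same petal. This is precisely the setting of the hyperbolicity theorems for billiards obeying the defocusing mechanism --- the circular-flower results \cite{3,4,5,6} and their extensions to tables bounded by arcs of confocal ellipses: there is an invariant cone field in the tangent bundle over the core that is strictly expanded along core orbits, hence the billiard map has non-zero Lyapunov exponents almost everywhere on $Co(M(A))$, i.e.\ the core is hyperbolic. To upgrade this to ergodicity (and indeed to the Bernoulli property) one invokes the fundamental theorem for hyperbolic billiards with singularities: one verifies that the singularity set of the core (the corner points between adjacent petals, together with the relative boundary of the core, consisting of orbits whose links pass through some vertex $A_i$) is a finite union of smooth compact curves; that these curves are correctly aligned with the cone field and that the complexity of the singularity set grows sub-exponentially; that the hyperbolicity is uniform on compact subsets away from the singularities; and, finally, that a connectedness (Hopf-chain) argument links the whole core into a single ergodic component.

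The routine part is the geometric bookkeeping of the first paragraph. The delicate part is carrying out the parameter choice explicitly --- pinning down the absolute-focusing threshold for the SOL petals of the regular $n$-gon and, above all for $n=6$, checking that the extra period-two constraint still leaves an admissible window. The genuinely hard part, as always for ergodicity of billiards, is the last step: verifying that the singularity manifolds of the core are transverse to, and correctly aligned with, the invariant cone field, and that the ergodic component is not split by an invariant curve. This is exactly what is meant by the result ``immediately following'' from existing theory --- the actual work is checking that the hypotheses of those general theorems are satisfied by elliptic-flower cores.
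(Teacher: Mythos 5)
Your overall strategy is the paper's: take a special one layer flower over a regular $n$-gon with the ellipses large and nearly circular, use that every core link crosses $A$ (so no two consecutive reflections on one petal and long free flights), conclude hyperbolicity from defocusing and ergodicity from the general theorems (the paper's references \cite{8,13}); the absolute-focusing requirement you impose in (i) is harmless but unnecessary for the core, exactly as Remark 4 points out. The genuine gap is in your criterion (ii): ``free path longer than the maximal radius of curvature of the petals'' is not a sufficient condition for defocusing to win. For two focusing arcs facing each other with radius of curvature $\rho$ at the bounce points, the period-two orbit is \emph{elliptic} for any separation $L$ with $\rho<L<2\rho$; hyperbolicity requires the link to exceed the sum of the focusing half-chords, $\tau>\rho_1\cos\theta_1+\rho_2\cos\theta_2$ (worst case $2\rho_{\max}$), which is what the paper's condition --- each core link traverses the maximal osculating circles at both of its endpoints, via (\ref{eq1}) and (\ref{eq2}) --- actually guarantees, since the length of such a link exceeds both full chords $2\rho_i\cos\theta_i$ and hence their half-sum. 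Your own $n=6$ caveat about the diametral period-two orbit is precisely the extremal instance of this stronger condition, not an additional isolated check, and the same near-normal, maximal-curvature links occur for every $n\ge5$.

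Moreover, with the correct threshold the verification is not a soft large-parameter limit of the kind your wording suggests. With the base polygon fixed and $b=R\to\infty$, a core link has length only about $2b-O(1)$, while $2\rho_{\max}=2b+2c^2/b$; the crude bounds are comparable, and the positive margin comes from the $O(1)$ offset (\ref{eq2}) of the ellipse centers from the center of $A$ together with the angle factors $\cos\theta_i$. Pinning this down is exactly the content of the paper's computation with the maximal osculating circles (one arc outside the table, one inside, and every segment from a petal into $A$ crossing them), so your step (ii) needs to be replaced by that finer geometric estimate before the cone-field/defocusing argument and the ergodicity machinery you invoke at the end can be applied.
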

\begin{proof} For the sake of clarity and brevity we will present a proof only for SOL elliptic flowers. 

Consider a hyperbolic and ergodic circular flower billiard \cite{6}, where all the corresponding circles have the same radius $R$, and the centers of all these circles are at the centers of the small diagonals of $A$.  Recall that all circular arcs, which complement regular components of an ergodic circular flower to the entire circles, belong to the interior of the billiard table. 

Recall now that the maximal radius of curvature of the boundary of an ellipse with the parameters (semi-axes lengths) $a$ and $b$ equals
\begin{equation}\label{eq1}
    \frac{a^2}{b}=b+\frac{c^2}{b}                 
\end{equation}
        
The osculating circle with this maximal radius is tangent to an ellipse at an end of its small axis. We call such circle a maximal osculating circle.         
        
Substitute now each of these $n$ circles by the tangent to them ellipses with the focuses located at the centers of the corresponding shortest diagonals of $A$ and the small axis orthogonal to these diagonals. As a result of we obtain a special one layer elliptic flower $Q(A)$, where the ends of the regular components of the boundary $\partial Q(A)$ lie on the lines orthogonal to the sides of the regular polygon $A$ at their centers.

All $n$ regular components of the boundary of $Q(A)$ are the arcs of ellipses, each of which has focuses at the ends of the shortest diagonals $A_i,A_{(i+2)}$ of $A$. In what follows we will refer to such diagonals of any polygon as to small diagonals (just having in mind that in a general convex polygon not all small diagonals could be also the shortest ones).
Observe that there are no ellipses among them with the same focuses because $n \geq 5$.

Denote by $l$ the length of sides of the base polygon $A$. Then the lengths of its small diagonals are equal to  $2l\cos(\pi/n)$. So $c$ in the relation (\ref{eq1}) equals $l\cos(\pi/n)$. The centers of all ellipses are at the centers of the corresponding shortest (small) diagonals of the basic polygon $A$. The distances between these points and the center of the polygon $A$ equal 
\begin{equation}\label{eq2}
 \frac{l(\cos(2\pi/n))}{2\sin(\pi/n)}
\end{equation}
 
Consider now the maximal osculating circle tangent to a boundary component at its point with the minimal curvature (i.e. at the end of the  small axis of the corresponding ellipse). Take sufficiently large radius $R$ for an ergodic circular flower over $A$. Observe that $R=b$, where $b$ is the length of the small semi-axis of the ellipse tangent to a regular component of the boundary of the circular flower.  It immediately follows from the relations (\ref{eq1}) and (\ref{eq2}) that for any $n\geq 5$ the maximal osculating circle consists of two arcs. The first arc goes outside of the elliptic flower billiard table, and the second arc completely lies within the billiard table $Q(A)$ if $R$ is large enough or if $c$ is small enough. In other words, both these conditions mean that the base polygon should be sufficiently small in comparison with the size of the elliptic flower billiard table $Q(A)$.
 
Take now any interior point $q$ of any regular component of the boundary $\partial Q(A)$. Then any straight segment, which connects $q$ to any other regular component of the boundary will intersect the osculating circle at the point $q$ within the billiard table $Q(A)$.

Sufficient conditions of ergodicity of the two-dimensional billiards were extensively studied (see e.g. \cite{8,13}). However, the cores of elliptic flower billiards have the special property which allows to reduce these conditions just to one. Namely, any orbit in the core cannot have two consecutive reflections off one and the same regular component (an elliptic arc) of the boundary $\partial Q(A)$. Therefore, it is enough for ergodicity that any link of any orbit in the core intersect both osculating circles of two ellipses at the points, which this link connect.

Recall now that any orbit of the core of the elliptic flower billiard intersects the base 
polygon $A$ between any two consecutive reflections off the boundary. Thus each link of such orbit intersect both osculating circles tangent to the boundary $\partial Q(A)$ at the ends of this link.  
 
Therefore the defocusing mechanism \cite{3,4,6} ensures that dynamics in the core of the elliptic flower $Q(A)$ is completely hyperbolic, i.e. it has nonvanishing Lyapunov exponents. The ergodicity of the dynamics in the core immediately follows from \cite{8,13}.
\end{proof}

{\bf Remark 3.} Observe that the chaotic cores of the special one layer elliptic flowers are nonempty subsets of the base polygon $A$. The sizes of these subsets decrease when $b=R$ increases. These cores are regular convex polygons, which, when $b$ tends to infinity, converge to the polygon formed by the lines orthogonal to the sides of $A$ at their ends. 

{\bf Remark 4.} A necessary condition of ergodicity of a two-dimensional billiard says that all focusing components of the boundary should be absolutely focusing \cite{12}. However, all orbits of the core cannot have two consecutive collisions with one and the same regular component of the boundary. Therefore the absolute focusing condition does not play any role here. Observe also that in the papers dealing with general results on ergodicity of billiards only ergodicity in the entire phase space was studied, rather than ergodicity of dynamics on some subset of the phase space. However, the corresponding proofs can be applied word by word for our case, when the core is an ergodic component with a positive, but not full, measure.

We will consider now conditions of hyperbolicity and ergodicity of dynamics in tracks. The orbits in tracks, in difference with the core, may have any number of consecutive reflections off the one and the same regular boundary component of an elliptic flower. Therefore, in dealing with tracks we will require all regular components of the boundary to be absolutely focusing. Recall \cite{7,9} that a $C^2$-smooth focusing curve $\Gamma$ is absolutely focusing if any parallel beam of rays, which is fallen on $\Gamma$, will leave this curve after any series of consecutive reflections off $\Gamma$ as a convergent (focused) beam.

\begin{Theorem}\label{thm3}
There exist elliptic flower billiards with hyperbolic and ergodic tracks.
\end{Theorem}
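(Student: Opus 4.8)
The construction I would use lives inside the family of special one layer elliptic flowers already produced in the proof of Theorem~\ref{thm2}. Fix a regular base polygon $A$ with $n\ge 5$ sides and side length $l$, and form the SOL elliptic flower $Q(A)$ whose $n$ petals are arcs of the ellipses with foci at the ends of the small diagonals $A_iA_{i+2}$, with minor semi-axis $b=R$, and with endpoints on the perpendicular bisectors of the sides of $A$; thus the $i$-th petal caps the vertex $A_{i+1}$, and by the reflection symmetry of the regular polygon two consecutive petals meet in a single corner lying on the perpendicular bisector of the side between the two capped vertices. For $R$ large compared with $l$ this is exactly the billiard of Theorem~\ref{thm2}, so its core is already hyperbolic and ergodic; it remains to treat the two positive-measure tracks that make up $Tr(A)$, consisting of the orbits that never cross $A$ and wind around it clockwise, respectively counter-clockwise (Theorem~\ref{thm1}).

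Two geometric facts make the defocusing mechanism applicable on a track. First, \emph{every petal is absolutely focusing once $R$ is large enough}: each petal is an arc of fixed angular size about an endpoint of the minor axis of an ellipse whose eccentricity tends to zero as $R\to\infty$, hence it becomes a $C^2$-small perturbation of the corresponding arc of its maximal osculating circle of radius $a^2/b$; since circular arcs are absolutely focusing and, by \cite{7,9,12}, sufficiently short arcs of ellipses near an endpoint of the minor axis are absolutely focusing as well, all petals of $Q(A)$ are absolutely focusing for $R$ large. Second, \emph{a link of a track orbit that joins two distinct petals and stays away from the corners where petals are glued is long compared with the osculating diameters at its endpoints}: such a link is essentially a chord of the nearly circular envelope of the petals subtending an angle bounded away from both $0$ and $\pi$ at the centre, and the same estimate as in Theorem~\ref{thm2} --- resting on the base polygon being small relative to $R=b$ --- makes its length exceed $\frac{1}{2}(r_1\cos\theta_1+r_2\cos\theta_2)$, where $r_j$ and $\theta_j$ are the osculating radii and the angles of incidence at its two ends; this is precisely the defocusing inequality for that link.

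With these two facts the argument follows the familiar defocusing scheme. An orbit in a track alternates \emph{runs} of consecutive reflections off one and the same petal with \emph{transition links} joining different petals; the first behaviour is possible in a track, in contrast with the core, precisely because an orbit here may hit a single petal repeatedly, which is exactly why absolute focusing is needed. By absolute focusing each run acts on an infinitesimal beam exactly as a single focusing reflection does, and it is followed by a transition link on which, by the second fact, the beam passes its focus and begins to diverge before the next run. Hence every orbit of a track, outside a set of measure zero, carries the standard pair of invariant cone fields and has non-vanishing Lyapunov exponents --- this is the defocusing mechanism of \cite{3,4,6} carried out inside the track. Ergodicity of each track as a single ergodic component then follows from the local (fundamental) ergodicity theorem for billiards with focusing components \cite{8,13}: it applies because all petals are absolutely focusing and the dynamics on the invariant positive-measure set $Tr(A)$ has just been shown to be hyperbolic, and, as pointed out in Remark~4, these theorems work verbatim for an ergodic component of positive but not full measure.

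The step I expect to be the real obstacle is the control of orbits that come close to the corners where consecutive petals are glued. For $R$ large these corners are nearly tangential, so the transition links joining the two petals at such a corner become arbitrarily short and the defocusing inequality of the second fact fails there; one must show that these short, over-focused links occur only for orbits spending a controlled number of reflections near the corner, or else that even long near-corner excursions cannot organise into a positive-measure family of stable orbits, using the shallow-wedge geometry at the corners together with the singularity analysis underlying \cite{8,13}. Pinning down the admissible range of $R$ (equivalently, how small the base polygon must be relative to the flower) so that absolute focusing of the petals, the defocusing inequality on the long transition links, and the harmlessness of the corner neighbourhoods all hold at once is where the real work lies; the absolute-focusing input and the ergodicity machinery are then essentially quotations of known results.
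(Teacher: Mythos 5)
Your proposal follows essentially the same route as the paper's proof: perturb the ergodic circular flower over a regular $n$-gon ($n\ge 5$) into a SOL elliptic flower with large $b=R$, obtain defocusing on links joining distinct petals via the maximal osculating circles, require absolute focusing of each petal to handle repeated reflections off a single petal, and invoke \cite{8,13} (together with the observation in Remark 4) for ergodicity of the invariant tracks. The only notable difference is that the paper certifies absolute focusing through Wojtkowski's explicit criterion from \cite{21} (the symmetric arc's projection onto the major axis must not exceed $a/\sqrt{2}$, which holds for $n\ge 5$ and $R$ large, and fails for triangles and squares) plus an elementary $\pi/4$ angle condition, rather than your perturbation-of-a-circular-arc argument, while your caveat about near-tangential corners is a legitimate technical point that the paper likewise leaves to the cited ergodicity machinery.
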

\begin{proof} In what follows tracks, which are hyperbolic and ergodic, will be called chaotic tracks. Again for the sake of clarity and simplicity we will consider only special one layer flower billiards.

Consider now (the same as above) small perturbations of the hyperbolic and ergodic circular flower, when all $n$ equal circles become equal ellipses with  focuses at the ends of the small diagonals of $A$. Let also their small axis equals $2R$, and thus $b=R$.  All $n$ regular components of the boundary of $Q(A)$ are the arcs of ellipses, each of which has focuses at the ends of the shortest diagonals $A_i,A_{(i+2)}$ of $A$.  Recall that there are no ellipses with the same focuses because $n \geq 5$.
 
The polygon $A$ is regular and the corresponding circular flower has identical arcs of a circle with radius $R$. Therefore each of the circles containing a component of this circular flower intersects only two neighboring boundary components. We can keep this property for an elliptic flower by choosing sufficiently large (with respect to the size of the base polygon $A$) circles. Indeed, it immediately follows from the relation (\ref{eq1}).
 
Now, it follows from the symmetry of $A$ and the symmetry of our construction that the maximal osculating circles to the ellipses, which form the boundary of elliptic flower billiard table intersect only two neighboring osculating circles. Besides any largest osculating circle does not contain the ends (which belong to the boundary of the billiard table) of the short diagonals of the neighboring osculating circles. Therefore divergence of the orbits beats convergence at any link of an orbit which connects two points of different boundary components.
 
However in tracks, in difference with core, orbits may have any number of consecutive reflections off one and the same boundary components. Therefore each regular boundary component should be absolutely focusing. The condition for the symmetric arcs of ellipses with center at an end of the small diagonal of ellipse to be absolutely focusing was found in \cite{21}.
This condition says that projections of these arcs to the large axis of ellipse should not exceed $a/\sqrt{2}$. It is easy to see that this condition is satisfied if we choose sufficiently large $R=b$ (for a fixed base polygon $A$ with at least five sides). Clearly for regular triangles and for the squares this condition never holds. 
 
A sufficient condition of the absolute focusing for petals of SOL elliptic flowers can be obtained from an elementary geometric consideration. This sufficient condition of absolute focusing says that the angle between the minor axis of an ellipse and the line connecting the center of an ellipse to the end point of the corresponding component of the boundary (petal) must be smaller than $\pi/4$. Clearly this angle is a decreasing function of the length $b$ of the small axis, and the condition of the absolute focusing holds for sufficiently large $R=b$.    
\end{proof}

By combining now the proofs of theorems and \ref{thm1} and \ref{thm2} we obtain 

\begin{Theorem}\label{thm4}
There exist hyperbolic elliptic flowers billiards with phase space consisting of three ergodic components of positive measure, which are two chaotic tracks and a chaotic core. 
\end{Theorem}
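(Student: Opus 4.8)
The plan is to exhibit a single special one-layer (SOL) elliptic flower that simultaneously satisfies the geometric hypotheses used in the proofs of Theorem~\ref{thm2} and Theorem~\ref{thm3}. Fix a regular base polygon $A$ with $n\ge 5$ sides. Both earlier proofs start from the same object: a hyperbolic and ergodic circular flower over $A$ built from $n$ equal circles of radius $R$ centered at the midpoints of the small diagonals of $A$, which is then perturbed by replacing each circle by the confocal ellipse with $b=R$ and with foci at the ends of the corresponding small diagonal. So, for a fixed $A$, there is really only one free parameter, $R=b$, and the task reduces to checking that all the conditions imposed in the two proofs hold for one and the same, sufficiently large, value of $R$.

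First I would recall the list of conditions. From the proof of Theorem~\ref{thm2}: for $R$ large enough (equivalently $c=l\cos(\pi/n)$ small relative to the table) the maximal osculating circle of each petal, whose radius is $a^2/b=b+c^2/b$, splits into an outer arc and an inner arc lying inside $Q(A)$, and every link of a core orbit — which by Corollary~\ref{cor1} crosses $A$, hence crosses both osculating circles tangent at its endpoints — meets these inner arcs inside the table; this is exactly the defocusing configuration. From the proof of Theorem~\ref{thm3}: for $R$ large enough each maximal osculating circle meets only its two neighbors and does not contain the boundary endpoints of the neighboring short diagonals, so divergence beats convergence on every cross-petal link, and the absolute focusing condition of \cite{21} holds, namely the projection of each petal onto the major axis is at most $a/\sqrt2$, equivalently the angle between the minor axis and the segment joining the center of the ellipse to an endpoint of the petal is below $\pi/4$. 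Each of these finitely many conditions is of the form ``holds for all $R>R_i$'' with $R_i=R_i(A)$; taking $R>R_0:=\max_i R_i$ produces an SOL elliptic flower $Q(A)$ meeting every requirement at once.

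Then I would assemble the conclusion. By Theorem~\ref{thm1} the billiard in $Q(A)$ has two positive-measure tracks $Tr^+$, $Tr^-$ (clockwise and counter-clockwise) and a core $Co(M(A))$; by Corollary~\ref{cor1} and Corollary~\ref{cor2} the core is invariant with positive measure, and $Tr^+\cup Tr^-\cup Co(M(A))$ exhausts $M(A)$ up to a set of measure zero. Applying the argument of Theorem~\ref{thm2} to this $Q(A)$, the dynamics on the core is hyperbolic by the defocusing mechanism \cite{3,4,6} and ergodic by \cite{8,13}, which as noted in Remark~4 applies verbatim to a positive-measure component on which no two consecutive reflections occur off the same petal. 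Applying the argument of Theorem~\ref{thm3} to the same $Q(A)$ — where the absolute focusing of every petal is what allows arbitrarily many consecutive reflections off one petal without destroying hyperbolicity — the dynamics on each of the two tracks is hyperbolic and ergodic. Hence $M(A)$ decomposes into exactly three ergodic components of positive measure, two chaotic tracks and a chaotic core, and the billiard is hyperbolic on all of them.

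The main obstacle, I expect, is not any single estimate but verifying compatibility: that the constraints pulling $R$ up in the core argument and in the track argument are genuinely simultaneously satisfiable, i.e. that no condition needed for chaotic tracks forces $R$ into a regime excluded by a condition needed for a chaotic core, or conversely. Since both families of conditions are monotone — all of the form ``$R$ sufficiently large for the fixed polygon $A$'' — this compatibility holds automatically, but it should be stated carefully, together with the remark that for $n=3,4$ the absolute-focusing condition fails for every $R$, which is why the hypothesis $n\ge 5$ (already present in Theorems~\ref{thm2} and \ref{thm3}) is essential. A secondary point worth making explicit is that ergodicity of each individual track, and the fact that the two tracks and the core are genuinely distinct ergodic components rather than being merged, follows from the separation of clockwise and counter-clockwise motion established in Theorem~\ref{thm1} together with the ergodicity statements above.
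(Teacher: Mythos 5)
Your proposal is correct and follows essentially the same route as the paper: the paper also obtains Theorem~\ref{thm4} by noting that one and the same SOL construction over a regular base polygon with $n\ge 5$ sides (perturbing a hyperbolic, ergodic circular flower with sufficiently large $R=b$) simultaneously satisfies the defocusing conditions for the core from Theorem~\ref{thm2} and the absolute-focusing/divergence conditions for the tracks from Theorem~\ref{thm3}. Your explicit remark that all the constraints are monotone in $R$, so compatibility is automatic, is exactly the (implicit) content of the paper's one-line argument; the only nitpick is that the ellipse replacing each circle is tangent to it at the end of its minor axis, not ``confocal'' with it.
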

Exactly the same proof, as the one of Theorem \ref{thm3} goes again for special one layer elliptic flowers with at least five sided regular base polygon by perturbing a hyperbolic and ergodic circular flower with large circles.

For general multilayered elliptic flowers a proof is quite analogous in logic but contains various extra trigonometric formulas. Once again, our goal in this paper is just to show that the dynamics described in theorems \ref{thm2}, \ref{thm3} and \ref{thm4} does exist in elliptic flowers. General multilayered elliptic flowers provide for more possibilities, some of which will be described in the last section. 

\textbf{Wild Rose and Narcissus: the simplest elliptic flower billiards with chaotic tracks and chaotic core.}
It was shown above that, if an elliptic flower has chaotic tracks and chaotic core, then a minimal number of sides of a basic polygon $A$ is five. Besides, a general multilayered elliptic flower billiard table, built over a 5-polygon, has more than five petals (regular components of the boundary). We give now the simplest example of an elliptic flower with all the properties considered in the theorems \ref{thm2}, \ref{thm3} and \ref{thm4}. (It can be done by choosing $b$ either large or small enough).   

Consider a regular convex pentagon $A$. Construct now a special one layer (SOL) elliptic flower over $A$. This flower has five petals, and therefore it can be naturally called a wild rose elliptic flower billiard. Let also each ellipse containing these petals have focuses at the ends of the corresponding small diagonal of $A$. It was shown in Theorem \ref{thm4} that this billiard has a chaotic core surrounded by chaotic tracks if the size of the base pentagon $A$ is relatively small in comparison with the size of the corresponding billiard table $Q(A)$.  
This wild rose billiard is the simplest one to built in a lab. Observe also that an experimenter can choose a relevant size of this devise.
Another simple example is the Narcissus billiard, which is built over a regular hexagons exactly at the same fashion as the wild rose was built over a regular pentagon (Fig. \ref{fig4}).  

\section{Some Open Questions and Conjectures}

As it was shown above, elliptic flower billiards demonstrate a large variety of behaviors, some of which have never been observed before, or even conjectured to exist. Therefore the first natural general question for the future studies of elliptic flowers is to understand how the changes in the dynamics of these billiards occur. Elliptic flower billiard can be specified by several parameters, all of which have a clear and visual geometric meaning. Therefore various types of bifurcations, which occur in these billiards, could be described in some clear geometric terms.

Certainly, some trivial answers are immediately there, e.g. how a core may loose ergodicity by acquiring  elliptic periodic orbit(s). However, a question on appearance of ``vortices" with regular dynamics within chaotic flows in tracks is, certainly, of interest. Another question is whether immediately infinitely many vortices appear in chaotic tracks as a result of such transition. A general intuition suggests that it is likely the case. However, even in this situation it would be interesting to look at the structure of the set of vortices, and determine whether or not there is some kind of  ``coherence" between them.

Another interesting issue, although probably essentially a technical one, is about construction of elliptic flowers with some specific properties for any convex polygon taken as a base. The results of the present paper were intentionally dealing with the simplest situation, when the base polygon is regular.

There are still, at least theoretical, possibilities that there exist elliptic flowers billiards, which have some interesting properties not considered in the present paper. 
Recall  that in all the examples of multilayered elliptic flower billiards, studied in the present paper, all petals (regular components of the boundary) belonged only to the first or to the second layer of the base polygon. 

Moreover, mostly even the simpler class of special one layer elliptic flowers was analysed. One may expect that the studies of elliptic flowers with the higher levels petals will bring some new examples of an unseen before behaviors, which are amenable to rigorous studies.  

Observe also that all elliptic flowers considered in our paper had $C^0$ boundary. However, it is easy to construct elliptic flowers with $C^1$ and with $C^2$ boundaries. (See, for instance, the string construction described in the section 2). Could such elliptic flowers billiards demonstrate analogous properties to the ones studied here? Certainly the tracks will be there, but it is not so clear about other features. 

Intriguing could be also the future studies of quantum elliptic flowers billiards. One may expect to see some new surprises there.

Another general question is whether it is possible to construct billiards with similar dynamics when a core is any convex set on a plane rather than a polygon.

\textit{Acknowledgements}. I am indebted to Hassan Attarchi for help with figures and useful comments.

\end{document}